\numberwithin{equation}{section}
\newtheorem{theorem}{Theorem}[section]
\newtheorem{proposition}[theorem]{Proposition}
\newtheorem{lemma}[theorem]{Lemma}
\theoremstyle{remark}
\newtheorem{remark}[theorem]{Remark}
\newcommand{\E}{\mathbb{E}}
\newcommand{\R}{\mathbb{R}}
\newcommand{\brM}[1]{\langle M\rangle_{#1}}
\newcommand{\brN}[1]{\langle N\rangle_{#1}}
\begin{document}

\begin{center}
  {\Large\bf Drift estimation for a partially observed mixed fractional Ornstein--Uhlenbeck process}\\[0.8em]

  {\large Chunhao Cai: caichh9@mail.sysu.edu.cn}\\[0.5em] 
  
{\small School of Mathematics (Zhuhai), Sun Yat-sen University}\\[1.0em]

\end{center}

\begin{abstract}
We consider estimation of the drift parameter $\vartheta>0$ in a \emph{partially observed}
Ornstein--Uhlenbeck type model driven by a mixed fractional Brownian noise. Our framework
extends the partially observed model of \cite{BrousteKleptsyna2010} to the \emph{mixed} case.
We construct the canonical innovation representation, derive the associated Kalman filter
and Riccati equations, and analyse the asymptotic behaviour of the filtering error covariance.

Within the Ibragimov--Khasminskii LAN framework we prove that the MLE of $\vartheta$,
based on continuous observation of the partially observed system on $[0,T]$, is consistent and
asymptotically normal with rate $\sqrt{T}$ and the Fisher Information is the same as in \cite{BrousteKleptsyna2010} or the standard Brownian motion case. 
\end{abstract}

\section{Introduction}

The statistical analysis of stochastic differential equations
driven by long-memory Gaussian noises has received considerable
attention in recent years. In particular, the Ornstein--Uhlenbeck
process driven by fractional Brownian motion (fBm) and its mixed
variants have been studied extensively from both probabilistic and
statistical viewpoints.

In the purely fractional setting, Brouste and Kleptsyna
\cite{BrousteKleptsyna2010} consider a partially observed
Ornstein--Uhlenbeck process driven by a fractional Brownian motion
with Hurst parameter $H>1/2$, observed through another
fractional Brownian motion corrupted by an independent noise. Although they extended the analysis from the case $H>1/2$ to $H<1/2$ via a transformation, the approach is complex and lacks intuitive clarity. Using an innovation approach and the Ibragimov--Khasminskii framework, they prove local asymptotic normality (LAN) and derive the asymptotic distribution of the maximum likelihood estimator
(MLE) for the drift parameter. Surprisingly, the Fisher
information in their model coincides with the one in the classical
Brownian Kalman--Bucy case and does not depend on the Hurst
parameter $H$.

In a different direction, the mixed fractional Brownian motion, a
sum of a standard Brownian motion and an independent fractional
Brownian motion, has been introduced and analysed as a more
flexible Gaussian driving noise; see, e.g.,
\cite{Cheridito2001,ChiganskyKleptsyna2017} and the references
therein. In particular, Chigansky and Kleptsyna
\cite{ChiganskyKleptsyna2017} studied the drift estimation problem
for a fully observed Ornstein--Uhlenbeck process driven by a mixed
fractional Brownian motion and showed that the MLE has the same
asymptotic variance as in the classical Brownian case, regardless
of the Hurst parameter and the mixing proportion. Their analysis
relies crucially on a spectral description of the covariance
operator of mixed fractional Brownian motion
\cite{KleptsynaLeonenkoMuravlev2018}.

The aim of the present paper is to bring these two lines of
research together and to study the drift estimation problem for a
\emph{partially observed} Ornstein--Uhlenbeck process driven by
mixed fractional Brownian noises. More precisely, we consider a
two-dimensional system
\begin{equation}\label{eq:intro-model}
  \begin{cases}
    \mathrm{d}X_t = -\vartheta X_t\,\mathrm{d}t + \mathrm{d}V_t,\\[0.3em]
    \mathrm{d}Y_t = \mu X_t\,\mathrm{d}t + \mathrm{d}W_t,
  \end{cases}
  \qquad t\in[0,T],
\end{equation}
where $V$ and $W$ are independent mixed fractional Brownian motions
of the form $B+B^H$, and only the observation process $Y$ is
available. The signal $X$ solves a mixed fractional
Ornstein--Uhlenbeck stochastic differential equation, and the
observation $Y$ is a noisy integral of $X$.

For a fixed value of the parameter $\vartheta$, let $\mathbf{P}_{\vartheta}^T$ denote the probability measure, induced by $(X^T, Y^T)$ on the function space $\mathcal{C}_{[0,T]} \times \mathcal{C}_{[0,T]}$ and let $\mathcal{F}_t^Y$ be the natural filtration of $Y$, $\mathcal{F}_t^Y=\sigma(Y_s, 0\leq s \leq t)$. Let $\mathcal{L}(\vartheta, Y^T)$ be the likelihood, $\mathrm{i.e.}$ the Radon-Nikodym derivative of $\mathbf{P}_{\vartheta}^T$, restricted to $\mathcal{F}_T^Y$ with respect to some reference measure on $\mathcal{C}_{[0,T]}$. We shall define the MLE $\hat{\vartheta}_T$ as the maximizer of the likelihood:
$$
\hat{\vartheta}_T=\mathrm{arg} \max_{\vartheta>0} \mathcal{L}(\vartheta, Y^T).
$$
Our goal is to obtain the same result presented in Theorem 1 in \cite{BrousteKleptsyna2010}, the key result is 
\begin{equation}\label{eq: asymptotically normal}
\sqrt{T}\left(\hat{\vartheta}_T-\vartheta\right) \overset{\text{law}}{\Rightarrow}\mathcal{N}(0, \mathcal{I}^{-1}(\vartheta))
\end{equation}
where $\mathcal{I}(\vartheta)$ stands for the Fisher Information which does not depend on H:
\begin{equation}\label{eq: Fisher Exact}
\mathcal{I}(\vartheta)=\frac{1}{2\vartheta}-\frac{2\vartheta}{\alpha (\alpha+\vartheta)}+\frac{\vartheta^2}{2\alpha^3},\, \alpha=\sqrt{\mu^2+\vartheta^2}.
\end{equation}

The rest of the paper is organised as follows. In Section~\ref{sec:canonical} we recall the
canonical innovation representation of the mixed fractional Brownian motion and construct the
state--observation model corresponding to \eqref{eq:intro-model}. Section~3 contains the Kalman--Bucy
filtering equations and the Riccati equation for the error covariance. In Section~4 we introduce
the Laplace transform of the squared distance between filters and derive its representation via
a matrix Riccati equation. In Section~5 we analyse the long-time behaviour of the error covariance
and prove the mixed Laplace condition \((L_{\mathrm{mix}})\). Section~6 identifies the Fisher information in closed
form and shows that it coincides with the classical Kalman--Bucy expression.
\section{Canonical representation of the mixed model}
\label{sec:canonical}

In this section we recall the canonical representation of mixed
fractional Brownian motion and construct the innovation form of
the mixed partially observed Ornstein--Uhlenbeck model
\eqref{eq:intro-model}. We keep the exposition concise and refer
to \cite{ChiganskyKleptsyna2017,KleptsynaLeonenkoMuravlev2018}
for detailed proofs.

\subsection{Mixed fractional Brownian motion and the fundamental martingale}

Fix $H\in(0,1)$ and consider a mixed fractional Brownian motion
$V=(V_t)_{t\ge0}$ of the form
\[
  V_t = B_t + B_t^H,
\]
where $B$ is a standard Brownian motion and $B^H$ is an
independent fractional Brownian motion with Hurst parameter $H$.
It is known that $V$ is a centred Gaussian process with continuous
sample paths and non-stationary increments. Its covariance
function can be written as
\[
  R_V(s,t)
  = \min(s,t)
  + R_H(s,t),
\]
where $R_H$ is the covariance of fractional Brownian motion.

Following \cite{ChiganskyKleptsyna2017}, one can construct a
\emph{fundamental martingale} $M=(M_t)_{t\ge0}$ associated with
$V$ by
\begin{equation}\label{eq:fundamental-M}
  M_t
  := \int_0^t g_V(s,t)\,\mathrm{d}V_s,
\end{equation}
where $g_V(\cdot,t)$ is a deterministic kernel solving a certain
Volterra-type integral equation. The process $M$ is a continuous
Gaussian martingale with increasing bracket
\[
  \brM{t}
  := \langle M\rangle_t
  = \int_0^t g_V(s,t)\,\mathrm{d}s,
\]
and $V$ admits a canonical representation
\begin{equation}\label{eq:V-canonical}
  V_t = \int_0^t \tilde g_V(s,t)\,\mathrm{d}M_s,
\end{equation}
where $\tilde g_V$ is another deterministic kernel. We denote by
\[
  \psi(t) := \frac{\mathrm{d}t}{\mathrm{d} \langle M\rangle_t }
\]
the density of the quadratic variation, when it exists. The mixed
OU analysis in \cite{ChiganskyKleptsyna2017} and the spectral
results of \cite{KleptsynaLeonenkoMuravlev2018} imply the
following properties of $\psi$.

\begin{lemma}\label{lem:psi-asymptotics}
There exists $t_0>0$ such that $\psi\in C^1[t_0,\infty)$,
$\psi(t)>0$ for all $t\ge t_0$, and
\begin{equation}\label{eq:psi-square-int}
  \int_{t_0}^{\infty}
    \left(
      \frac{\dot\psi(t)}{\psi(t)}
    \right)^2
  \mathrm{d}t < \infty.
\end{equation}
Moreover, there exist constants $0<c_1\le c_2<\infty$ such that,
as $t\to\infty$,
\begin{equation}\label{eq:psi-growth}
  c_1\min(1,t^{2H-1})
  \le \psi(t)
  \le c_2\max(1,t^{2H-1}).
\end{equation}
In particular,
\[
  \lim_{t\to\infty}
  \frac{1}{t}\max\bigl(\psi(t),\psi(t)^{-1}\bigr)
  = 0.
\]
\end{lemma}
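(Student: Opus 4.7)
The plan is to combine the Volterra-equation characterisation of the kernel $g_V(\cdot,t)$ given in \cite{ChiganskyKleptsyna2017} with the spectral description of the mixed fractional covariance operator developed in \cite{KleptsynaLeonenkoMuravlev2018}. I would organise the argument into three independent blocks: first, a regularity block giving the $C^1$ smoothness and strict positivity of $\psi$ on $[t_0,\infty)$; second, an envelope block giving \eqref{eq:psi-growth}; third, a decay block giving the square integrability \eqref{eq:psi-square-int}. The final limit $t^{-1}\max(\psi,\psi^{-1})\to 0$ then follows from \eqref{eq:psi-growth} because $2H-1<1$ for every $H\in(0,1)$.

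For the regularity block, recall that $\langle M\rangle_t=\int_0^t g_V(s,t)\,\mathrm{d}s$ where $g_V(\cdot,t)$ is the unique $L^2[0,t]$ solution of a Volterra equation whose kernel is a multiple of $|s-r|^{2H-1}$ plus the identity. Because the Brownian summand of $V$ contributes the identity term, this Volterra operator is invertible uniformly on compact subsets of $(0,\infty)$; a standard bootstrap then yields partial derivatives $\partial_t g_V(s,t)$ and $\partial_{tt}g_V(s,t)$ that are locally integrable in $s$ for $t\ge t_0$. Differentiating $\langle M\rangle_t$ twice under the integral sign produces continuous expressions for $\mathrm{d}\langle M\rangle_t/\mathrm{d}t$, hence for $\psi$, and for $\dot\psi$ on $[t_0,\infty)$, while $\psi(t)>0$ follows from $g_V(t,t)>0$, itself a consequence of the non-degeneracy of the Brownian component.

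For the envelope \eqref{eq:psi-growth} I would invoke the spectral representation from \cite{KleptsynaLeonenkoMuravlev2018}: the eigenvalues of the covariance operator of $V$ restricted to $[0,t]$ admit an explicit asymptotic describing a transition between a pure-Brownian regime and a pure-fractional regime. In the regime $H>1/2$ the fractional part dominates for large $t$ and $\psi(t)\asymp t^{2H-1}\to\infty$; in the regime $H<1/2$ the Brownian part dominates and $\psi(t)\asymp 1$. Both cases are covered at once by $c_1\min(1,t^{2H-1})\le\psi(t)\le c_2\max(1,t^{2H-1})$, with the transitional behaviour on compact intervals absorbed into the constants by continuity and positivity of $\psi$ on $[t_0,\infty)$.

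The main obstacle is the square integrability \eqref{eq:psi-square-int}. My plan is to upgrade the envelope for $\psi$ to a matching asymptotic for $\dot\psi$ by differentiating the spectral (or equivalently the associated Riccati) representation term by term. I expect $\dot\psi(t)=O(t^{2H-2})$ when $H>1/2$ and a corresponding $O(t^{-2})$-type decay when $H<1/2$, which combined with the lower bound on $\psi$ gives $|\dot\psi/\psi|=O(1/t)$ in both regimes and hence an integrable square at infinity. The delicate point is to justify the term-by-term differentiation uniformly in $t\ge t_0$, for which I would use the uniform decay of the eigenvalues already obtained in \cite{KleptsynaLeonenkoMuravlev2018} together with a dominated-convergence argument. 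This is the step I would invest the most technical effort in.
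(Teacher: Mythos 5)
Your overall plan is sound, but it is considerably more laborious than what the paper does, and the one step you yourself flag as delicate is left genuinely open. The paper's proof is essentially a one-line transfer: writing $\phi(t):=\tfrac{d}{dt}\langle M\rangle_t$, it invokes Lemmas~2.4--2.6 of \cite{ChiganskyKleptsyna2017}, which already establish positivity and $C^1$ regularity of $\phi$ for large $t$, the square integrability $\int_{t_0}^{\infty}(\tfrac{d}{dt}\log\phi(t))^2\,dt<\infty$, and the polynomial order $\phi(t)\asymp t^{1-2H}$ for $H>\tfrac12$ and $\phi(t)\asymp 1$ for $H\le\tfrac12$. Since $\psi=1/\phi$, one has $\tfrac{d}{dt}\log\psi=-\tfrac{d}{dt}\log\phi$, so \eqref{eq:psi-square-int} and \eqref{eq:psi-growth} follow by taking reciprocals, and the final limit is immediate from $2H-1\in[-1,1)$. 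Your regularity and envelope blocks are consistent with how those cited lemmas are themselves proved, so nothing there is wrong; you are simply re-deriving the reference instead of using it.

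The substantive issue is your decay block. You propose to prove the pointwise bound $|\dot\psi(t)/\psi(t)|=O(1/t)$ by term-by-term differentiation of the spectral representation and then deduce \eqref{eq:psi-square-int}. That pointwise bound is strictly stronger than the $L^2$ statement actually needed, and it is also stronger than what the cited literature provides: \cite{ChiganskyKleptsyna2017} establishes the square integrability of the logarithmic derivative directly, not a pointwise $O(1/t)$ rate, and upgrading the asymptotics of $\phi$ to matching asymptotics of $\dot\phi$ is exactly the kind of step that does not follow from the function-level envelope alone (asymptotics of a function do not control its derivative without additional monotonicity or uniform-differentiation arguments). As written, your proof of \eqref{eq:psi-square-int} therefore has a gap: either you must carry out the uniform term-by-term differentiation in full (which is the hard technical content you defer), or you should replace this block by a direct citation of the $L^2$ bound for $\tfrac{d}{dt}\log\phi$ and the observation that $\log\psi=-\log\phi$, which is what the paper does.
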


\begin{proof}
Write $\phi(t):=\tfrac{d}{dt}\langle M\rangle_t$. By the results of Lemmas~2.4–2.6 of \cite{ChiganskyKleptsyna2017}, $\phi$ is positive and continuous for large $t$, satisfies
$\int_{t_0}^{\infty}\!(\tfrac{d}{dt}\log\phi(t))^2\,dt<\infty$, and its polynomial order obeys
\[
  \phi(t)\asymp
  \begin{cases}
     t^{\,1-2H}, & H>\tfrac12,\\
     1, & H\le\tfrac12,
  \end{cases}
  \qquad (t\to\infty).
\]
Since $\psi=1/\phi$, we have $\tfrac{d}{dt}\log\psi(t)=-(\tfrac{d}{dt}\log\phi(t))$, hence
\eqref{eq:psi-square-int}. Taking reciprocals of the above asymptotics yields
$\psi(t)\asymp t^{\,2H-1}$ for $H>\tfrac12$ and $\psi(t)\asymp 1$ for $H\le\tfrac12$,
which implies the two–sided bounds \eqref{eq:psi-growth} for some $c_1,c_2>0$ and $t\ge t_0$.
Finally, because $2H-1\in[-1,1)$, \eqref{eq:psi-growth} gives
$\max\{\psi(t),\psi(t)^{-1}\}=o(t)$, which is the last limit.
\end{proof}

The same construction can be applied to the mixed fractional
Brownian noise $W$ in \eqref{eq:intro-model}, yielding another
fundamental martingale $N$ with the same bracket as $M$:
\[
  \brN{t} = \brM{t},\qquad t\ge0.
\]

\subsection{State--observation model in innovation form}
From \cite{ChiganskyKleptsyna2017}, we define 
$$
Z_t=\int_0^t g_V(s,t)\mathrm{d}X_s,\, 0\leq s <t \leq T
$$ 
and 
$$
Q_t=\frac{\mathrm{d}}{\mathrm{d}\langle M\rangle_t}\int_0^t g_V(s,t)X_s,
$$
then 
$$
Q_t=\frac{1}{2}\int_0^t(\psi(s)+\psi(t))dZ_s,\, 0< t \leq T.  
$$
Let us define a vector $\zeta=(\zeta_t^1,\,\,\zeta_t^2)^*$ where $\zeta_t^1=Z_t$ and $\zeta_t^2=\int_0^t \psi(s)\mathrm{d}Z_s$. On the other hand, we define $Z_t^O=\int_0^t g_V(s,t)dY_s,\, 0\leq s <t \leq T$, then $dZ_t^O=\mu Q_td\langle M\rangle_t+dN_t$. From these analysis we can construct a two-dimensional state process
$\zeta_t=(\zeta_t^{(1)},\zeta_t^{(2)})^{\top}$ and an
$\mathcal{F}^Y$-adapted scalar process $Z_t^O$ such that the joint
law of $(X,Y)$ is equivalent to the joint law of $(\zeta,Z^O)$ and
the latter satisfies
\begin{align}
  \mathrm{d}\zeta_t
  &=
  -\frac{\vartheta}{2}\,A(t)\,\zeta_t\,
     \mathrm{d}\brM{t}
  + b(t)\,\mathrm{d}M_t,
  \label{eq:state-mixed}\\[0.3em]
  \mathrm{d}Z_t^O
  &=
  \frac{\mu}{2}\,\ell(t)^{\top}\zeta_t\,\mathrm{d}\brN{t}
  + \mathrm{d}N_t,
  \label{eq:obs-mixed}
\end{align}
where
\begin{equation}\label{eq:Abell-mixed}
  A(t)
  =
  \begin{pmatrix}
    \psi(t) & 1\\[0.3em]
    \psi(t)^2 & \psi(t)
  \end{pmatrix},
  \qquad
  b(t)
  =
  \begin{pmatrix}
    1\\[0.3em]
    \psi(t)
  \end{pmatrix},
  \qquad
  \ell(t)
  =
  \begin{pmatrix}
    \psi(t)\\[0.3em]
    1
  \end{pmatrix}.
\end{equation}
The processes $M$ and $N$ are continuous martingales with common
bracket $\brM{t}=\brN{t}$, and the filtration generated by $Z^O$
coincides with the original observation filtration
$\mathcal{F}^Y_t$.

The representation \eqref{eq:state-mixed}--\eqref{eq:obs-mixed}
is the mixed fractional analogue of the innovation representation
in \cite[Eq.~(8)--(9)]{BrousteKleptsyna2010}. 

\section{Kalman filtering and error covariance}
\label{sec:kalman}

We next derive the Kalman--Bucy filter for the conditional mean of
$\zeta$ and the Riccati equation for the filtering error
covariance. This is a mixed fractional version of Section 3 in
\cite{BrousteKleptsyna2010}.

\subsection{Kalman filter}

Let $\mathcal{F}_t^Y$ be the $\sigma$-field generated by $Z^O$ up
to time $t$. For a fixed parameter $\vartheta\in\Theta$ we define
the conditional mean
\[
  \pi_t^{\vartheta}(\zeta)
  := \E_{\vartheta}(\zeta_t\mid\mathcal{F}_t^Y)
  \in\R^2.
\]
The Kalman--Bucy filtering theory for linear Gaussian systems in
innovation form (see, e.g., \cite{LiptserShiryayev1977}) implies
that $\pi_t^{\vartheta}(\zeta)$ satisfies
\begin{equation}\label{eq:filter-mixed}
  \mathrm{d}\pi_t^{\vartheta}(\zeta)
  =
  -\frac{\vartheta}{2}\,A(t)\,\pi_t^{\vartheta}(\zeta)\,
    \mathrm{d}\brN{t}
  + \frac{\mu}{2}\,
    \gamma_{\zeta,\zeta}^{\vartheta}(t)\,\ell(t)\,
    \mathrm{d}\nu_t^{\vartheta},
\end{equation}
where the innovation process $\nu^{\vartheta}$ is defined by
\begin{equation}\label{eq:innov-mixed}
  \mathrm{d}\nu_t^{\vartheta}
  =
  \mathrm{d}Z_t^O
  -\frac{\mu}{2}\,\ell(t)^{\top}
   \pi_t^{\vartheta}(\zeta)\,\mathrm{d}\brN{t},
  \qquad \nu_0^{\vartheta}=0.
\end{equation}
Under $P_{\vartheta}$, $\nu^{\vartheta}$ is a continuous
martingale with bracket $\langle\nu^{\vartheta}\rangle_t=\brN{t}$.

\subsection{Filtering error covariance and Riccati equation}

The filtering error covariance is defined by
\[
  \gamma_{\zeta,\zeta}^{\vartheta}(t)
  :=
  \mathrm{Cov}_{\vartheta}\bigl(
    \zeta_t-\pi_t^{\vartheta}(\zeta)
    \,\big|\,
    \mathcal{F}_t^Y
  \bigr)
  \in\R^{2\times 2}.
\]
It solves the matrix Riccati equation
\begin{equation}\label{eq:riccati-mixed}
  \frac{\mathrm{d}}{\mathrm{d}\brN{t}}
  \gamma_{\zeta,\zeta}^{\vartheta}(t)
 =
 -\frac{\vartheta}{2}\Big(
    A(t)\,\gamma_{\zeta,\zeta}^{\vartheta}(t)
   +\gamma_{\zeta,\zeta}^{\vartheta}(t)\,A(t)^{\top}
  \Big)
 + b(t)b(t)^{\top}
 - \frac{\mu^2}{4}\,
   \gamma_{\zeta,\zeta}^{\vartheta}(t)\,
   \ell(t)\ell(t)^{\top}\,
   \gamma_{\zeta,\zeta}^{\vartheta}(t),
 \quad
 \gamma_{\zeta,\zeta}^{\vartheta}(0)=0.
\end{equation}
In view of Lemma~\ref{lem:psi-asymptotics}, the coefficients of
\eqref{eq:riccati-mixed} are smooth functions of time with at most
polynomial growth. Standard results on Riccati equations imply
existence and uniqueness of a continuous symmetric
non-negative-definite solution $\gamma_{\zeta,\zeta}^{\vartheta}(t)$
on $[0,\infty)$.

\section{Laplace transform and four-dimensional system}
\label{sec:laplace}
\subsection{Likelihood ratio and Laplace transform}
From the previous filtering theorem and Classical Girsanov theorem we can define the likelihood function 
$$
\mathcal{L}\left(\vartheta,\, Z^{O,T}\right)=\exp \left(\frac{\mu}{2}\int_0^T \ell(t) \pi_t^{\vartheta}(\zeta)dZ_t^O-\frac{\mu^2}{8} \int_0^T   \pi_t^{\vartheta} \ell(t) \ell(t)^{*}  \pi_t^{\vartheta}(\zeta)^{*}\mathrm{d}\langle N\rangle_t\right)
$$
where $1/8$ will be the same as in \cite{BrousteKleptsyna2010} with $\lambda=1/2$. From the likelihood function we can construct almost the same likelihood ratio and verify the same condition of (A.1)--(A.3) in \cite{BrousteKleptsyna2010}. also that is to say we will verify the same condition (L) of \cite{BrousteKleptsyna2010} for the Laplace transform of the quadratic form of the difference of the differnce 
\[
  \delta_{\vartheta_1,\vartheta_2}(t)
  := \pi_t^{\vartheta_2}(\zeta) - \pi_t^{\vartheta_1}(\zeta)
  \in\R^2.
\]
which is defined by 
\begin{equation}\label{eq:Laplace-mixed}
  L_T^{\mathrm{mix}}(a,\vartheta_1,\vartheta_2)
  :=
  \E_{\vartheta_1}\Big[
    \exp\Big\{
      -a\,\frac{\mu^2}{8}
      \int_0^T
        \delta_{\vartheta_1,\vartheta_2}(t)^{\top}
        \ell(t)\ell(t)^{\top}
        \delta_{\vartheta_1,\vartheta_2}(t)\,
      \mathrm{d}\brN{t}
    \Big\}
  \Big].
\end{equation}
We will prove that there exists $a_0<0$ such that for all $a>a_0$, $\forall u_1, u_2\in \mathbb{R}$
$$
\lim_{T\rightarrow \infty}  L_T^{\mathrm{mix}}\left(a,\vartheta+\frac{u_1}{\sqrt{T}},\vartheta+\frac{u_2}{\sqrt{T}}\right)=\exp \left(-a\frac{(u_2-u_1)^2}{2} \mathcal{I}(\vartheta)\right), 
$$
and in the following part we will try to achieve this limit and check that the fisher information $\mathcal{I}(\vartheta)$ satisfy the formula of \eqref{eq: Fisher Exact}.

\subsection{Four-dimensional linear system}

Introduce the $4$-dimensional process
\[
  \widetilde{\pi}_t
  :=
  \begin{pmatrix}
    \pi_t^{\vartheta_1}(\zeta)\\[0.3em]
    \delta_{\vartheta_1,\vartheta_2}(t)
  \end{pmatrix}
  \in\R^4,
\]
and the covariance matrices
\[
  \gamma_{\zeta,\zeta}^{\vartheta_i}(t),\qquad
  D^{\gamma}_{\vartheta_1,\vartheta_2}(t)
  :=
  \gamma_{\zeta,\zeta}^{\vartheta_2}(t)
  - \gamma_{\zeta,\zeta}^{\vartheta_1}(t).
\]
The same algebra as in \cite{BrousteKleptsyna2010} yields, under
$P_{\vartheta_1}$,
\begin{equation}\label{eq:4d-filter-mixed}
  \mathrm{d}\widetilde{\pi}_t
  =
  \mathcal{A}(t)\,\widetilde{\pi}_t\,
    \mathrm{d}\brN{t}
  + B(t)\,\mathrm{d}\nu_t^{\vartheta_1},
\end{equation}
where the $4\times 4$ drift matrix is
\begin{equation}\label{eq:A-block-mixed}
  \mathcal{A}(t)
  :=
  \begin{pmatrix}
    -\dfrac{\vartheta_1}{2}A(t) & 0\\[0.6em]
    -\dfrac{\vartheta_2-\vartheta_1}{2}A(t)
    & \mathcal{B}^{\vartheta_2}(t)
  \end{pmatrix},
\end{equation}
with
\begin{equation}\label{eq:Btheta2-mixed}
  \mathcal{B}^{\vartheta_2}(t)
  :=
  -\frac{\vartheta_2}{2}A(t)
  -\frac{\mu^2}{4}\,
    \gamma_{\zeta,\zeta}^{\vartheta_2}(t)\,
    \ell(t)\ell(t)^{\top},
\end{equation}
and
\begin{equation}\label{eq:B-noise-mixed}
  B(t)
  :=
  \frac{\mu}{2}
  \begin{pmatrix}
    \gamma_{\zeta,\zeta}^{\vartheta_1}(t) \ell(t)\\[0.3em]
    D^{\gamma}_{\vartheta_1,\vartheta_2}(t) \ell(t)
  \end{pmatrix}
  \in\R^{4\times 1}.
\end{equation}

\subsection{Riccati equation and linearisation}

As in \cite{BrousteKleptsyna2010}, the Laplace transform
\eqref{eq:Laplace-mixed} can be represented via the solution of a
matrix Riccati equation. Let $H(t)\in\R^{4\times 4}$ be symmetric
and solve
\begin{equation}\label{eq:H-Riccati-mixed}
  \frac{\mathrm{d}}{\mathrm{d}\brN{t}}H(t)
  =
  \mathcal{A}(t)H(t) + H(t)\mathcal{A}(t)^{\top}
  + B(t)B(t)^{\top}
  - a\mu^2/4\,H(t)M(t)H(t),
  \qquad H(0)=0,
\end{equation}
where 
\[
  M(t)
  :=
  \begin{pmatrix}
    0 & 0\\[0.3em]
    0 & \ell(t)\ell(t)^{\top}
  \end{pmatrix}.
\]
Introduce the pair $(\Xi_1,\Xi_2)$ solving the linear system
\begin{align}
  \frac{\mathrm{d}}{\mathrm{d}\brN{t}}\Xi_1(t)
  &=
  -\Xi_1(t)\mathcal{A}(t)
  + a\mu^2/4\,\Xi_2(t)M(t),
  \qquad \Xi_1(0)=I_4,
  \label{eq:Xi1-mixed}\\[0.3em]
  \frac{\mathrm{d}}{\mathrm{d}\brN{t}}\Xi_2(t)
  &=
  \Xi_1(t)B(t)B(t)^{\top}
  + \Xi_2(t)\mathcal{A}(t)^{\top},
  \qquad \Xi_2(0)=0.
  \label{eq:Xi2-mixed}
\end{align}
Then $H(t)=\Xi_1(t)^{-1}\Xi_2(t)$ solves
\eqref{eq:H-Riccati-mixed}, and the Laplace transform
\eqref{eq:Laplace-mixed} admits the representation
\begin{equation}\label{eq:Laplace-Xi-mixed}
  L_T^{\mathrm{mix}}(a,\vartheta_1,\vartheta_2)
  =
  \exp\left\{
    -\frac{1}{2}
    \int_0^T
      \mathrm{tr}\,\mathcal{A}(t)\,\mathrm{d}\brN{t}
  \right\}
  \bigl(\det\Xi_1(T)\bigr)^{-1/2}.
\end{equation}

\section{Asymptotics of the covariance and the Laplace condition}
\label{sec:LAN}

In this section we study the asymptotic behaviour of the filtering
error covariance and use it to establish the mixed Laplace
condition $(L_{\mathrm{mix}})$ and, consequently, LAN for the
family of measures generated by the observation process.

\subsection{Rescaled covariance and its limit}

We first analyse the long-time behaviour of the covariance
$\gamma_{\zeta,\zeta}^{\vartheta}(t)$ and prove a mixed analogue
of (28) in \cite{BrousteKleptsyna2010}.

Define the scaling matrix
\begin{equation}\label{eq:Delta-def}
  \Delta(t)
  :=
  \begin{pmatrix}
    \sqrt{\psi(t)} & 0\\[0.3em]
    0 & \psi(t)^{-1/2}
  \end{pmatrix},
\end{equation}
and the rescaled covariance
\[
  \widetilde{\gamma}^{\vartheta}(t)
  := \Delta(t)\,
     \gamma_{\zeta,\zeta}^{\vartheta}(t)\,
     \Delta(t).
\]

\begin{proposition}\label{prop:gamma-limit}
  Fix $\vartheta\in\Theta$. Under the assumptions of
  Lemma~\ref{lem:psi-asymptotics}, there exists a unique symmetric
  non-negative definite matrix
  $\tilde{\Gamma}_{\infty}(\vartheta)\in\mathbb{R}^{2\times 2}$ such that
  \begin{equation}\label{eq:gamma-limit}
    \lim_{t\to\infty}
    \Delta(t)\,
    \gamma_{\zeta,\zeta}^{\vartheta}(t)\,
    \Delta(t)
    =
    \tilde{\Gamma}_{\infty}(\vartheta).
  \end{equation}
\end{proposition}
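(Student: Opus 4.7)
The strategy is to exploit the rank-one structure of $A(t)$ hidden in \eqref{eq:Abell-mixed} and to recognise that, after conjugation by $\Delta(t)$ and passage to the physical time $t$, the Riccati equation \eqref{eq:riccati-mixed} becomes an autonomous constant-coefficient Riccati equation perturbed by an integrably small term driven by $\dot\psi/\psi$. First I would observe the factorisation $A(t)=b(t)\ell(t)^{\top}$, together with the identities $\Delta(t)b(t)=\Delta(t)^{-1}\ell(t)=\sqrt{\psi(t)}\,e$, where $e:=(1,1)^{\top}$. These algebraic identities are precisely what makes the scaling $\Delta(t)$ of \eqref{eq:Delta-def} the right one: every one of $b$, $\ell$ and $A$ is tied to a single rank-one direction after conjugation.

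Using $\mathrm{d}\brM{t}=\mathrm{d}t/\psi(t)$ and $\dot\Delta(t)\Delta(t)^{-1}=\tfrac{\dot\psi(t)}{2\psi(t)}\,\Sigma$ with $\Sigma:=\mathrm{diag}(1,-1)$, a direct conjugation of every term on the right-hand side of \eqref{eq:riccati-mixed} yields
\begin{equation*}
  \frac{\mathrm{d}\widetilde{\gamma}^{\vartheta}}{\mathrm{d}t}
  \;=\;
  \mathcal{R}_{\vartheta}\bigl(\widetilde{\gamma}^{\vartheta}\bigr)
  \;+\;
  \frac{\dot\psi(t)}{2\psi(t)}
  \bigl(\Sigma\,\widetilde{\gamma}^{\vartheta}+\widetilde{\gamma}^{\vartheta}\,\Sigma\bigr),
\end{equation*}
where $\mathcal{R}_{\vartheta}(\Gamma) := -\tfrac{\vartheta}{2}\bigl(ee^{\top}\Gamma+\Gamma\,ee^{\top}\bigr)+ee^{\top}-\tfrac{\mu^2}{4}\,\Gamma\,ee^{\top}\,\Gamma$. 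All explicit powers of $\psi(t)$ cancel, by design of $\Delta(t)$, and only the logarithmic derivative $\dot\psi/\psi$ survives; its square is integrable by Lemma~\ref{lem:psi-asymptotics}.

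Next I would identify $\tilde{\Gamma}_{\infty}(\vartheta)$ as the unique symmetric non-negative-definite root of $\mathcal{R}_{\vartheta}(\Gamma)=0$. In the orthonormal basis $\{e/\sqrt{2},\,(1,-1)^{\top}/\sqrt{2}\}$ the algebraic equation decouples into a scalar quadratic along $e$ (whose stabilising root is explicit) and a trivial block orthogonally, giving existence, uniqueness and positivity; linearising $\mathcal{R}_{\vartheta}$ at $\tilde{\Gamma}_{\infty}(\vartheta)$ yields a Hurwitz generator and hence exponential attraction of $\tilde{\Gamma}_{\infty}(\vartheta)$ for the unperturbed autonomous flow. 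To pass to the full equation I would combine this exponential contractivity with a Gronwall-type argument absorbing the $L^2$-small perturbation, using the polynomial bounds \eqref{eq:psi-growth} together with the monotonicity of symmetric Riccati flows to secure an a priori bound on $\|\widetilde{\gamma}^{\vartheta}(t)\|$.

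The delicate point is this final stability/perturbation step: since $\dot\psi/\psi$ is only $L^2$ and not $L^1$ in time, ordinary Gronwall does not close the argument, and one must absorb the perturbation using the uniform contractivity near $\tilde{\Gamma}_{\infty}(\vartheta)$. This is where the sharp form of Lemma~\ref{lem:psi-asymptotics}—square-integrability of $\dot\psi/\psi$ combined with the two-sided polynomial bounds on $\psi$—is genuinely used, and it is the only nontrivial obstacle in the proof.
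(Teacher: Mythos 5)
Your reduction is exactly the paper's: the identities $\Delta(t)b(t)=\Delta(t)^{-1}\ell(t)=\sqrt{\psi(t)}\,e$ turn the rescaled Riccati equation into the autonomous flow $\dot{\widetilde\Gamma}=\widetilde F_\infty(\widetilde\Gamma)+R(t)\widetilde\Gamma+\widetilde\Gamma R(t)^{\top}$ with $\widetilde F_\infty(X)=-\tfrac{\vartheta}{2}(JX+XJ)+J-\tfrac{\mu^2}{4}XJX$ and $J=ee^{\top}$, which is the paper's Steps 1--4 essentially verbatim; your observation that $\dot\psi/\psi$ is only square-integrable is, if anything, more faithful to Lemma~\ref{lem:psi-asymptotics} than the paper's own Step 3, which asserts $\int\|R(t)\|\,\mathrm{d}t<\infty$ without justification.

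The gap is in your stability step. Because $J=ee^{\top}$ has rank one, the equation $\mathcal{R}_{\vartheta}(\Gamma)=0$ does \emph{not} have a unique symmetric nonnegative root: in the basis $u=e/\sqrt2$, $v=(1,-1)^{\top}/\sqrt2$ the fixed-point equations read $2-2\vartheta x_{uu}-\tfrac{\mu^2}{2}(x_{uu}^2+x_{uv}^2)=0$, $-(\vartheta+\tfrac{\mu^2}{2}x_{uu})x_{uv}=0$ and $-\tfrac{\mu^2}{2}x_{uv}^2=0$; the last equation is vacuous once $x_{uv}=0$, so $x_{vv}$ is a free nonnegative parameter and every matrix $2g(\vartheta)uu^{\top}+c\,vv^{\top}$ with $c\ge0$ is a root. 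Correspondingly, the linearisation of $\mathcal{R}_{\vartheta}$ at any such root has a zero eigenvalue in the $vv$-direction (the equation $\dot x_{vv}=-\tfrac{\mu^2}{2}x_{uv}^2$ does not involve $x_{vv}$ at all), so the generator is \emph{not} Hurwitz and there is no exponential attraction; your plan to absorb the merely-$L^2$ perturbation by uniform contractivity near $\widetilde\Gamma_{\infty}$ cannot close in that direction. What is actually needed (and what the paper's Steps 5--6 lean on implicitly) is a separate argument for $x_{vv}$: it is monotone nonincreasing up to an integrable remainder, hence convergent, but its limit is determined by the whole trajectory through $\int x_{uv}^2\,\mathrm{d}t$ rather than by the algebraic fixed-point equation, and one must show this integral is finite and, for the later identification $\widetilde\Gamma_{\infty}=g(\vartheta)J$, that the limit is in fact $0$. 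Your contractivity argument is sound on the $(x_{uu},x_{uv})$-block, where the linearised eigenvalues $-2\vartheta-\mu^2x_{uu}^{*}$ and $-\vartheta-\tfrac{\mu^2}{2}x_{uu}^{*}$ are indeed negative; it is only the kernel direction $v$ of $J$ that breaks the claimed uniqueness and exponential convergence.
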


\begin{proof}
The proof is based on rewriting the Riccati equation
\eqref{eq:riccati-mixed} in physical time, rescaling the covariance by
$\Delta(t)$ from \eqref{eq:Delta-def}, and comparing the resulting
equation with a limiting autonomous Riccati ODE.

\medskip
First, we rewrite the Riccati equation in physical time. From
\eqref{eq:riccati-mixed} and $\mathrm{d}\brN{t}/\mathrm{d}t=\psi(t)^{-1}$
we obtain for $\Gamma_t:=\gamma_{\zeta,\zeta}^{\vartheta}(t)$,
\[
  \dot\Gamma_t
  :=
  \frac{\mathrm{d}}{\mathrm{d}t}\Gamma_t
  =
  \frac{1}{\psi(t)}\,F\bigl(t,\Gamma_t\bigr),
\]
where $F$ is defined by 
$$
F(t,X)
  :=
  -\frac{\vartheta}{2}\bigl(A(t)X+XA(t)^{\top}\bigr)
  +b(t)b(t)^{\top}
  -\frac{\mu^2}{4}\,X\,\ell(t)\ell(t)^{\top}X,
$$
with $A(t),b(t),\ell(t)$ are given by \eqref{eq:Abell-mixed}.

\medskip
Next, define the rescaled covariance
\[
  \widetilde{\Gamma}_t
  :=
  \Delta(t)\Gamma_t\Delta(t).
\]
Differentiating with respect to $t$ yields
\[
  \dot{\widetilde{\Gamma}}_t
  =
  \dot{\Delta}(t)\Gamma_t\Delta(t)
  +\Delta(t)\dot{\Gamma}_t\Delta(t)
  +\Delta(t)\Gamma_t\dot{\Delta}(t),
\]
or, equivalently,
\[
  \dot{\widetilde{\Gamma}}_t
  =
  R(t)\widetilde{\Gamma}_t+\widetilde{\Gamma}_tR(t)^{\top}
  +\Delta(t)\dot{\Gamma}_t\Delta(t),
  \qquad
  R(t):=\dot{\Delta}(t)\Delta(t)^{-1}.
\]
Using $\dot{\Gamma}_t=\psi(t)^{-1}F(t,\Gamma_t)$ and
$\Gamma_t=\Delta(t)^{-1}\widetilde{\Gamma}_t\Delta(t)^{-1}$, we obtain
\[
  \Delta(t)\dot{\Gamma}_t\Delta(t)
  =
  \frac{1}{\psi(t)}\,
  \Delta(t)\,
  F\bigl(t,\Delta(t)^{-1}\widetilde{\Gamma}_t\Delta(t)^{-1}\bigr)\,
  \Delta(t).
\]
Define
\[
  \widetilde{F}(t,X)
  :=
  \frac{1}{\psi(t)}\,
  \Delta(t)\,
  F\bigl(t,\Delta(t)^{-1}X\Delta(t)^{-1}\bigr)\,
  \Delta(t),
\]
so that
\begin{equation}\label{eq:tildeGamma-eq}
  \dot{\widetilde{\Gamma}}_t
  =
  R(t)\widetilde{\Gamma}_t+\widetilde{\Gamma}_tR(t)^{\top}
  +\widetilde{F}(t,\widetilde{\Gamma}_t).
\end{equation}

\medskip
We now control the perturbation matrix $R(t)$. From \eqref{eq:Delta-def},
\[
  \Delta(t)
  =
  \begin{pmatrix}
    \psi(t)^{1/2} & 0\\
    0 & \psi(t)^{-1/2}
  \end{pmatrix},
  \qquad
  \Delta(t)^{-1}
  =
  \begin{pmatrix}
    \psi(t)^{-1/2} & 0\\
    0 & \psi(t)^{1/2}
  \end{pmatrix}.
\]
Differentiating gives
\[
  R(t)
  =
  \dot{\Delta}(t)\Delta(t)^{-1}
  =
  \frac{1}{2}\frac{\dot\psi(t)}{\psi(t)}
  \begin{pmatrix}
    1&0\\
    0&-1
  \end{pmatrix}.
\]
By \eqref{eq:psi-square-int}, the scalar function $\dot\psi(t)/\psi(t)$ is square integrable on
$[t_0,\infty)$ and tends to $0$ as $t\to\infty$. In particular,
\[
  \lim_{t\to\infty}\|R(t)\|=0,
  \qquad
  \int_{t_0}^{\infty}\|R(t)\|^2\,\mathrm{d}t<\infty.
\]

With the scaling \eqref{eq:Delta-def}, the nonlinearity in \eqref{eq:tildeGamma-eq} admits the exact,
time-independent form
\[
  \widetilde{F}(t,X)\equiv \widetilde{F}_{\infty}(X)
  :=
  -\frac{\vartheta}{2}(JX+XJ)+J-\frac{\mu^2}{4}\,XJX,
  \qquad
  J=\begin{pmatrix}1&1\\[0.2em]1&1\end{pmatrix}.
\]
Hence \eqref{eq:tildeGamma-eq} can be rewritten as
\begin{equation}\label{eq:tildeGamma-final-prop}
  \dot{\widetilde{\Gamma}}_t
  =
  R(t)\widetilde{\Gamma}_t+\widetilde{\Gamma}_tR(t)^{\top}
  +\widetilde{F}_{\infty}\bigl(\widetilde{\Gamma}_t\bigr).
\end{equation}

\medskip
We next show that $\{\widetilde{\Gamma}_t:t\ge t_0\}$ is bounded in $\mathbb{S}_+^2$ without assuming
$\int_{t_0}^{\infty}\|R(t)\|\,\mathrm{d}t<\infty$.
Let $u=\tfrac{1}{\sqrt2}(1,1)^{\top}$ and $v=\tfrac{1}{\sqrt2}(1,-1)^{\top}$ so that $J=2uu^{\top}$.
In the $\{u,v\}$ basis write
\[
  x_{uu}=u^{\top}\widetilde{\Gamma}_tu,\qquad
  x_{uv}=u^{\top}\widetilde{\Gamma}_tv,\qquad
  x_{vv}=v^{\top}\widetilde{\Gamma}_tv.
\]
A direct computation (the terms with $R(t)$ are collected in remainders $\mathcal{R}_{\bullet}$) yields
\begin{align*}
  \dot x_{uu} &= 2 - 2\vartheta\,x_{uu} - \tfrac{\mu^2}{2}\bigl(x_{uu}^2 + x_{uv}^2\bigr) + \mathcal{R}_{uu}(t),\\
  \dot x_{uv} &= - \vartheta\,x_{uv} - \tfrac{\mu^2}{2} x_{uu}x_{uv} + \mathcal{R}_{uv}(t),\\
  \dot x_{vv} &= - \tfrac{\mu^2}{2} x_{uv}^2 + \mathcal{R}_{vv}(t),
\end{align*}
and there exists $c>0$ (independent of $t$) such that
\[
  |\mathcal{R}_{\bullet}(t)|
  \le
  c\,\|R(t)\|\,\bigl(1+x_{uu}+x_{uv}^2+x_{vv}\bigr).
\]
Define the Lyapunov function $V(\widetilde{\Gamma}_t):=x_{uu}+\lambda x_{uv}^2+\beta x_{vv}$ with fixed
$\lambda,\beta>0$. Then there exist constants $C_0,\kappa_0>0$ such that
\[
  \dot V(t)\le C_0-\kappa_0 V(t)+c\,\|R(t)\|\,(1+V(t)),\qquad t\ge t_0.
\]
Since $\|R(t)\|\to0$ as $t\to\infty$, there exists $T_1\ge t_0$ such that
$c\|R(t)\|\le \kappa_0/2$ for all $t\ge T_1$. Hence, for $t\ge T_1$,
\[
  \dot V(t)\le C_0+c\|R(t)\|-\frac{\kappa_0}{2}V(t)\le C_1-\frac{\kappa_0}{2}V(t),
\]
where $C_1:=C_0+\sup_{t\ge T_1}c\|R(t)\|<\infty$. Solving this linear differential inequality yields
\[
  V(t)\le V(T_1)e^{-\frac{\kappa_0}{2}(t-T_1)}+\frac{2C_1}{\kappa_0},
  \qquad t\ge T_1.
\]
Therefore $\sup_{t\ge t_0}V(\widetilde{\Gamma}_t)<\infty$, and consequently the trajectory
$\{\widetilde{\Gamma}_t:t\ge t_0\}$ is bounded in $\mathbb{S}_+^2$.

\medskip
Finally, we identify the limit by a fully expanded asymptotically autonomous argument.
Consider the limiting autonomous Riccati ODE on $\mathbb{S}_+^2$,
\[
  \dot X=\widetilde{F}_{\infty}(X).
\]
Solving $\widetilde{F}_{\infty}(X_{\infty})=0$ with the ansatz $X_{\infty}=g(\vartheta)J$ gives
$\mu^2g^2+2\vartheta g-1=0$ and the unique nonnegative root
\[
  g(\vartheta)=\frac{\sqrt{\vartheta^2+\mu^2}-\vartheta}{\mu^2},
  \qquad
  X_{\infty}(\vartheta)=g(\vartheta)J.
\]

Let $\omega(\widetilde{\Gamma})$ denote the $\omega$--limit set of the bounded trajectory
$\{\widetilde{\Gamma}_t:t\ge t_0\}$, i.e.\ the set of all accumulation points of
$\widetilde{\Gamma}_t$ as $t\to\infty$. We show that every $\bar X\in\omega(\widetilde{\Gamma})$
is an equilibrium of the limiting ODE, i.e.\ $\widetilde{F}_{\infty}(\bar X)=0$.

Fix $\bar X\in\omega(\widetilde{\Gamma})$ and choose $t_n\to\infty$ such that
$\widetilde{\Gamma}_{t_n}\to\bar X$. Let $S>0$ be arbitrary and define
$X_n:[-S,S]\to\mathbb{S}_+^2$ by $X_n(s):=\widetilde{\Gamma}_{t_n+s}$.
Since $\widetilde{\Gamma}_t$ is bounded and the right-hand side of
\eqref{eq:tildeGamma-final-prop} is locally Lipschitz in $X$ and bounded on bounded sets,
the family $\{X_n\}$ is uniformly bounded and equicontinuous on $[-S,S]$.
By the Arzel\`a--Ascoli theorem, there exists a subsequence (not relabelled) and a continuous map
$X_{\ast}:[-S,S]\to\mathbb{S}_+^2$ such that $X_n\to X_{\ast}$ uniformly on $[-S,S]$.

We pass to the limit in the integral form of \eqref{eq:tildeGamma-final-prop}. For $s\in[-S,S]$,
\[
  X_n(s)-X_n(0)
  =
  \int_0^s
  \Bigl(
    R(t_n+r)X_n(r)+X_n(r)R(t_n+r)^{\top}
    +\widetilde{F}_{\infty}(X_n(r))
  \Bigr)\,\mathrm{d}r.
\]
Because $\|R(t)\|\to0$ as $t\to\infty$, we have $\sup_{|r|\le S}\|R(t_n+r)\|\to0$.
Using uniform boundedness of $X_n$, uniform convergence $X_n\to X_{\ast}$, and continuity of
$\widetilde{F}_{\infty}$, we can let $n\to\infty$ to obtain
\[
  X_{\ast}(s)-X_{\ast}(0)
  =
  \int_0^s \widetilde{F}_{\infty}\bigl(X_{\ast}(r)\bigr)\,\mathrm{d}r,
  \qquad s\in[-S,S].
\]
Thus $X_{\ast}$ is a solution of the limiting autonomous ODE on $[-S,S]$ with initial condition
$X_{\ast}(0)=\bar X$. In particular, $\omega(\widetilde{\Gamma})$ is invariant under the flow of the
limiting autonomous ODE and consists of equilibria of that ODE. Therefore every accumulation point
$\bar X$ satisfies $\widetilde{F}_{\infty}(\bar X)=0$.

Since the trajectory $\widetilde{\Gamma}_t$ is bounded and its $\omega$--limit set consists of
equilibria, the limit $\lim_{t\to\infty}\widetilde{\Gamma}_t$ exists and equals an equilibrium.
Denoting this limit by $\widetilde{\Gamma}_{\infty}(\vartheta)$, we conclude that
$\widetilde{\Gamma}_{\infty}(\vartheta)=X_{\infty}(\vartheta)=g(\vartheta)J$, and hence
\eqref{eq:gamma-limit} holds.
\end{proof}

\subsection{Asymptotic form of the drift matrix}

We now use Proposition~\ref{prop:gamma-limit} to identify the
asymptotic form of the drift matrix $\mathcal{A}(t)$ in
\eqref{eq:A-block-mixed}. Fix $\vartheta_2\in\Theta$. From
\eqref{eq:Btheta2-mixed} and \eqref{eq:gamma-limit} we have, as
$t\to\infty$,
\[
  \gamma_{\zeta,\zeta}^{\vartheta_2}(t)\,
  \ell(t)\ell(t)^{\top}
  \sim
  \Delta(t)^{-1}\widetilde{\Gamma}_{\infty}(\vartheta_2)\Delta(t)^{-1}
  \,\ell(t)\ell(t)^{\top}=2g(\vartheta_2)A(t).
\]
Using the explicit form of $\ell(t)$ and the asymptotics of
$\psi(t)$ in Lemma~\ref{lem:psi-asymptotics},one verifies that the asymptotic structure of the product on the right-hand side matches $A(t)$: Specially, there exists a constant matrix
$K(\vartheta_2)\in\mathbb{R}^{2\times 2}$ such that
\[
  \gamma_{\zeta,\zeta}^{\vartheta_2}(t)\,
  \ell(t)\ell(t)^{\top}
  =
  K(\vartheta_2)\,A(t)
  + o\bigl(\|A(t)\|\bigr),
  \qquad t\to\infty.
\]
Consequently,
\[
  \mathcal{B}^{\vartheta_2}(t)
  =
  -\frac{\vartheta_2}{2}A(t)
  -\frac{\mu^2}{4}\,\gamma_{\zeta,\zeta}^{\vartheta_2}(t)\,
                 \ell(t)\ell(t)^{\top}
  =
  C^{\mathrm{mix}}(\vartheta_2)\,A(t)
  + o\bigl(\|A(t)\|\bigr),
\]
where
\[
  C^{\mathrm{mix}}(\vartheta_2)
  :=
  -\frac{\vartheta_2}{2}I_2
  -\frac{\mu^2}{4}\,K(\vartheta_2)=-\frac{\sqrt{\mu^2+\vartheta_2^2}}{2}I_2
\]
is a constant $2\times 2$ matrix depending only on
$\vartheta_2,\mu$ and the mixed structure. Thus the $4\times 4$
drift matrix \eqref{eq:A-block-mixed} has the asymptotic form
\begin{equation}\label{eq:A-infty-mixed}
  \mathcal{A}(t)
  =
  \mathcal{A}_{\infty}^{\mathrm{mix}}(t)
  + o\bigl(\|A(t)\|\bigr),
  \qquad t\to\infty,
\end{equation}
with
\begin{equation}\label{eq:A-infty-def}
  \mathcal{A}_{\infty}^{\mathrm{mix}}(t)
  :=
  \begin{pmatrix}
    -\dfrac{\vartheta_1}{2}A(t) & 0\\[0.6em]
    -\dfrac{\vartheta_2-\vartheta_1}{2}A(t)
    & C^{\mathrm{mix}}(\vartheta_2)A(t)
  \end{pmatrix}.
\end{equation}
This is the mixed version of equation (29) in
\cite{BrousteKleptsyna2010}. In particular, the matrix
$C^{\mathrm{mix}}(\vartheta_2)$ has two real negative eigenvalues;
we denote by $-\alpha_2^{\mathrm{mix}}(\vartheta_2)$ the one
corresponding to the ``fast'' decay mode of the difference process
$\delta_{\vartheta_1,\vartheta_2}(t)$.

\section{Derivation of the Fisher Information via Spectral Analysis of the Effective Hamiltonian}
\label{sec:sec:hamiltonian}

In this section, we formulate and prove the mixed Laplace condition and the LAN property. We derive the explicit form of the Fisher information by analyzing the spectral properties of the effective Hamiltonian system derived in the previous section.

\subsection{Asymptotical formula of Laplace transform}

For $u_1,u_2\in\R$ and $T>0$, set local alternatives:
\[
  \vartheta_1 = \vartheta + \frac{u_1}{\sqrt{T}},
  \qquad
  \vartheta_2 = \vartheta + \frac{u_2}{\sqrt{T}},
  \qquad
  h:=\vartheta_2-\vartheta_1 = \frac{u_2-u_1}{\sqrt{T}}.
\]
Let $L_T^{\mathrm{mix}}(a,\vartheta_1,\vartheta_2)$ be the Laplace transform defined in \eqref{eq:Laplace-mixed}.

\begin{remark}
Throughout this section, we work with the asymptotic mixed drift matrix $\mathcal{A}_{\infty}^{\mathrm{mix}}(t)$ introduced in \eqref{eq:A-infty-def}. The approximation error is $o(\|A(t)\|)$ and, under the integrability assumptions of Lemma~\ref{lem:psi-asymptotics}, does not affect the exponential growth rate of the determinant or the LAN limit.
\end{remark}

From the calculations in Section~\ref{sec:laplace} and the limit identification in Section~\ref{sec:LAN}, we have the asymptotic formula:
\begin{equation}\label{eq:lim-Laplace}
L_T^{\mathrm{mix}}(a,\vartheta_1,\vartheta_2)
=
\exp\left\{
-\frac12\int_0^T \mathrm{tr}\,\mathcal{A}_{\infty}^{\mathrm{mix}}(t)\,\mathrm{d}\brN{t}
\right\}
\bigl(\det\Xi_{1,\infty}(T)\bigr)^{-1/2}\,(1+o(1)),
\qquad T\to\infty.
\end{equation}
where we replace the exact coefficients by their limits, resulting in a relative error $1+o(1)$ and here 
\begin{align}
  \frac{\mathrm{d}}{\mathrm{d}\brN{t}}\Xi_{1,\infty}(t)
  &=
  -\Xi_{1,\infty}(t) \mathcal{A}_{\infty}^{\mathrm{mix}}(t)
  + \frac{1}{4}a\mu^2\,\Xi_{2,\infty}(t)M(t),
  \qquad \Xi_{1,\infty}(0)=I_4,
  \label{eq:eq:Xi1-infty}\\[0.3em]
  \frac{\mathrm{d}}{\mathrm{d}\brN{t}}\Xi_{2,\infty}(t)
  &=
  \Xi_{1,\infty}(t)B(t)B(t)^{\top}
  + \Xi_{2,\infty}(t) \mathcal{A}_{\infty}^{\mathrm{mix}}(t)^{\top},
  \qquad \Xi_{2,\infty}(0)=0.
  \label{eq:eq:Xi2-infty}
\end{align}
\medskip

Now, we are prepared to deal with the Kronecker or Hamiltonian embedding. Recall from \eqref{eq:Abell-mixed} that
\[
  A(t)=b(t)\,\ell(t)^{\top},
  \qquad
  b(t)=\begin{pmatrix}1\\ \psi(t)\end{pmatrix},
  \qquad
  \ell(t)=\begin{pmatrix}\psi(t)\\ 1\end{pmatrix},
\]
and the fact
\[
  K(\vartheta_2)=2g(\vartheta_2)I_2,
  \qquad
  g(\vartheta):=\frac{1}{\alpha(\vartheta)+\vartheta},
  \qquad
  \alpha(\vartheta):=\sqrt{\vartheta^{2}+\mu^{2}},
\]
we have
\[
  K(\vartheta_2)\,A(t)=2g(\vartheta_2)\,b(t)\ell(t)^{\top}.
\]
Consequently, all $t$--dependent terms in the limiting mixed drift
matrix $\mathcal{A}_{\infty}^{\mathrm{mix}}(t)$ and in
$B(t)B(t)^{\top}$ inherit the same rank--one factor $A(t)$; collecting
the deterministic prefactors yields an $8\times 8$ linear Hamiltonian
system. More precisely, with $\mathbf{J}=\begin{bmatrix}
0 & 1 \\
1 & 0
\end{bmatrix}$ , the pair
$\bigl(\Xi_{1,\infty}(t),\,\Xi_{2,\infty}(t)\mathbf{J}\bigr)$ satisfies
\[
\frac{\mathrm{d}\left(\Xi_{1,\infty}(t),\, \Xi_{2,\infty}(t)\mathbf{J} \right)}{\mathrm{d}\brN{t}}
=
\left(\Xi_{1,\infty}(t),\, \Xi_{2,\infty}(t)\mathbf{J}\right)
\left(\Xi_{mix} \otimes \frac{1}{2} A(t) \right),
\]
where $\Xi_{mix}$ is the constant $4\times 4$ matrix
\begin{equation}\label{eq:Xi_matrix}
\Xi_{mix}=
\begin{pmatrix}
\vartheta_1 & 0 & 2\mu^2 g_1^2 & 2\mu^2 g_1 \Delta g \\
h & \alpha_2 & 2\mu^2 g_1 \Delta g & 2\mu^2 (\Delta g)^2 \\
0 & 0 & -\vartheta_1 & -h \\
0 & \frac{1}{2} a \mu^2 & 0 & -\alpha_2
\end{pmatrix}.
\end{equation}
Here we use the shorthand
\[
  \alpha_i:=\alpha(\vartheta_i)=\sqrt{\vartheta_i^2+\mu^2},
  \qquad
  g_i:=g(\vartheta_i)=\frac{1}{\alpha_i+\vartheta_i},
  \qquad
  \Delta g:=g_1-g_2,
\]
so that $\alpha_2=\alpha(\vartheta_2)$ and $g_1=g(\vartheta_1)$.

\subsection{Perturbation analysis of eigenvalues}

The asymptotic behaviour of the likelihood is governed by the eigenvalues of $\Xi_{mix}$ with positive real parts. Let these eigenvalues be $x_1(h)$ and $x_3(h)$, which are perturbations of $\vartheta_1$ and $\alpha_2$ respectively.

\begin{lemma}\label{lem:eig-exp}
Assume $|h|$ is sufficiently small and $\vartheta_1>0$. Then $x_1(h)$ and $x_3(h)$ admit the following second-order expansions:
\begin{align}
  x_1(h) &= \vartheta_1 + \frac{a\mu^2\,g_1^{\,2}}{2\vartheta_1}\,h^2 + o(h^2), \label{eq:x1-exp}\\
  x_3(h) &= \alpha_2 + \frac{a\mu^4}{2\alpha_2}\,\bigl(g'(\vartheta)\bigr)^2\,h^2 + o(h^2), \label{eq:x3-exp}
\end{align}
where $g'(\vartheta)=-\bigl(\alpha(\vartheta)(\alpha(\vartheta)+\vartheta)\bigr)^{-1}$.
\end{lemma}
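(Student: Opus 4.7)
The plan is classical Rayleigh--Schr\"odinger perturbation theory for simple eigenvalues of the constant $4\times4$ matrix $\Xi_{mix}$, viewed as an analytic family in the scalar parameter $h$. Recall that only three entries of $\Xi_{mix}$ depend on $h$: the explicit $\pm h$ in positions $(2,1)$ and $(3,4)$, the scalar $\alpha_2=\alpha(\vartheta_1+h)$ in $(2,2)$ and $(4,4)$, and the differences $\Delta g=g(\vartheta_1)-g(\vartheta_1+h)$ appearing in the rank-one second-column/fourth-column coupling. Taylor expansion of these quantities yields $\Xi_{mix}(h)=\Xi_0+h\,\Xi_1+h^2\Xi_2+O(h^3)$, with $\Xi_1,\Xi_2$ fully explicit in terms of $\vartheta_1,\mu,a$ and the derivatives $\alpha'(\vartheta_1)=\vartheta_1/\alpha_1$, $g'(\vartheta_1)=-1/(\alpha_1(\alpha_1+\vartheta_1))$, $\alpha''(\vartheta_1)$, $g''(\vartheta_1)$.

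The first step is to diagonalise $\Xi_0$. After permuting the basis to $(e_1,e_3,e_2,e_4)$, $\Xi_0$ splits into the two triangular blocks
\[
\begin{pmatrix} \vartheta_1 & 2\mu^{2}g_1^{2}\\ 0 & -\vartheta_1\end{pmatrix},
\qquad
\begin{pmatrix} \alpha_1 & 0\\ \tfrac{1}{2}a\mu^{2} & -\alpha_1\end{pmatrix},
\]
giving the four simple eigenvalues $\pm\vartheta_1,\pm\alpha_1$ (simple since $\alpha_1>\vartheta_1>0$). I would then write down the biorthogonal eigenpairs in the original basis: for $\lambda_0=\vartheta_1$ the right eigenvector is $v_0=(1,0,0,0)^{\top}$ and the left one, normalised so that $w_0^{\top}v_0=1$, is $w_0=(1,0,\mu^{2}g_1^{2}/\vartheta_1,0)^{\top}$; for $\lambda_0=\alpha_1$ one gets $v_0=(0,1,0,a\mu^{2}/(4\alpha_1))^{\top}$ and $w_0=(0,1,0,0)^{\top}$.

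Now I apply the standard second-order formula
\[
\lambda(h)=\lambda_0+h\,w_0^{\top}\Xi_1 v_0+h^{2}\!\left[w_0^{\top}\Xi_2 v_0+w_0^{\top}\Xi_1(\Xi_0-\lambda_0 I)^{+}\bigl(\lambda_1 I-\Xi_1\bigr)v_0\right]+o(h^{2}),
\]
where the reduced resolvent is implemented by solving $(\Xi_0-\lambda_0 I)v_1=(\lambda_1 I-\Xi_1)v_0$ in the complement of $\mathrm{span}\{v_0\}$ (i.e.\ $w_0^{\top}v_1=0$). For $x_1(h)$, the structure of $\Xi_1$ makes the first-order coefficient $w_0^{\top}\Xi_1 v_0$ vanish; computing $v_1$ explicitly and plugging into the second-order formula then yields \eqref{eq:x1-exp}. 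For $x_3(h)$, the first-order coefficient $w_0^{\top}\Xi_1 v_0=\vartheta_1/\alpha_1=\alpha'(\vartheta_1)$ combines with the $h^{2}$ contribution $\tfrac12\alpha''(\vartheta_1)$ sitting inside $\Xi_2$ to reconstruct the Taylor expansion of $\alpha_2(h)$; the remaining $h^{2}$ piece, coming from $\Xi_1(\Xi_0-\alpha_1 I)^{+}\Xi_1$, is the genuine perturbation and gives \eqref{eq:x3-exp}.

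The main obstacle will be the algebraic simplification at the second-order stage: the reduced-resolvent sum expands into several products of $g_1,\vartheta_1,\alpha_1,\mu^{2}$ and $g'(\vartheta_1)$ that look unrelated at first sight. The collapse to the compact forms in \eqref{eq:x1-exp}--\eqref{eq:x3-exp} relies on the defining identities $\alpha_1^{2}-\vartheta_1^{2}=\mu^{2}$, $g_1(\alpha_1+\vartheta_1)=1$ and $g'(\vartheta)=-g(\vartheta)/\alpha(\vartheta)$, together with the $\pm$-pair (Hamiltonian) symmetry of the spectrum, which forces $\mathrm{tr}\,\Xi_{mix}(h)=0$ and provides a useful consistency check on the computation. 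Once these simplifications are in place, \eqref{eq:x1-exp} and \eqref{eq:x3-exp} follow directly, and analyticity of the eigenvalues in $h$ (since they are simple at $h=0$) supplies the remainder estimate $o(h^{2})$.
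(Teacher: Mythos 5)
Your route is genuinely different from the paper's: you set up classical Rayleigh--Schr\"odinger perturbation theory for the simple eigenvalues of $\Xi_{mix}(h)=\Xi_0+h\Xi_1+h^2\Xi_2+O(h^3)$, whereas the paper isolates $x_3$ by a Schur complement on the $\{2,4\}$ block and obtains $x_1$ by balancing the diagonal of the characteristic determinant against a single ``feedback cycle'' $1\to3\to4\to2\to1$. Your unperturbed data are all correct: the block triangularisation of $\Xi_0$, the eigenpairs $v_0,w_0$ for $\vartheta_1$ and $\alpha_1$, the vanishing of $w_0^{\top}\Xi_1v_0$ at $\vartheta_1$, and the value $\alpha'(\vartheta_1)$ at $\alpha_1$. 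In principle this is the more systematic and more easily checkable argument.

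The gap is that the decisive second-order algebra is only asserted (``plugging into the second-order formula then yields \eqref{eq:x1-exp}''), and if you actually carry it out with the $\Xi_1$ you describe, it does \emph{not} collapse to \eqref{eq:x1-exp}--\eqref{eq:x3-exp}. Concretely, for $\lambda_0=\vartheta_1$ the corrector is $v_1=(0,\,-\tfrac{1}{\alpha_1-\vartheta_1},\,0,\,-\tfrac{a}{2})^{\top}$, whose fourth component is generated by the chain $(2,1)\to(4,2)$; since $\Xi_1$ has the nonzero entry $(\Xi_1)_{14}=-2\mu^2g_1g'(\vartheta_1)$ (the $h$-coefficient of $2\mu^2g_1\Delta g$), the term $w_0^{\top}\Xi_1v_1$ equals $\tfrac{a\mu^2g_1^2}{2\vartheta_1}+a\mu^2g_1g'(\vartheta_1)$, i.e.\ there is an extra, non-vanishing contribution $a\mu^2g_1g'(\vartheta_1)=-a\mu^2g_1^2/\alpha_1$. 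The same conclusion follows from the exact characteristic polynomial of \eqref{eq:Xi_matrix},
\[
\det(\Xi_{mix}-xI_4)=(\vartheta_1^2-x^2)(\alpha_2^2-x^2)+a\mu^4(\Delta g)^2(\vartheta_1^2-x^2)+a\mu^4g_1^2h^2-2a\mu^4\vartheta_1 g_1\,\Delta g\,h,
\]
whose root near $\vartheta_1$ is $\vartheta_1+\tfrac{a\mu^2g_1(g_1+2\vartheta_1g'(\vartheta_1))}{2\vartheta_1}h^2+o(h^2)$, and whose root near $\alpha_2$ likewise carries the additional $-\tfrac{a\mu^2g_1(g_1+2\vartheta_1g'(\vartheta_1))}{2\alpha_2}h^2$ beyond \eqref{eq:x3-exp}. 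In other words, the expansions in the Lemma correspond to dropping the order-$h$ coupling $2\mu^2 g_1\Delta g$ in positions $(1,4)$ and $(2,3)$ (equivalently, the $3$-cycle $1\to4\to2\to1$, which the paper's proof also overlooks when it calls $1\to3\to4\to2\to1$ ``the unique feedback cycle''). Your proposal therefore either needs the missing computation together with a justification for why these contributions should be discarded, or it will prove a different statement than the one claimed; the appeal to ``algebraic simplification'' via $\alpha_1^2-\vartheta_1^2=\mu^2$ and $g'=-g/\alpha$ cannot make the extra term vanish, since $g_1g'(\vartheta_1)\neq0$ for $\vartheta_1>0$.
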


\begin{proof}
We analyze the roots of the characteristic equation $\det(\Xi_{mix}-xI_4)=0$.

We first get the expansion for $x_3(h)$ via Schur Complement\cite{HJ1985}. Since $x_3$ is a perturbation of $\alpha_2$, and $\alpha_2$ is separated from $\vartheta_1$, we can isolate the $2\times 2$ block corresponding to indices $\{2,4\}$ (the observation subsystem). Using the Schur complement with respect to this block, the characteristic equation near $x \approx \alpha_2$ reduces to:
\[
\det\begin{pmatrix}
\alpha_{2}-x & 2\mu^{2}(\Delta g)^{2} \\
\frac{1}{2}a\mu^{2} & -\alpha_{2}-x
\end{pmatrix}
+ o(h^2) = 0.
\]
Substituting $\Delta g \approx -g'(\vartheta)h$ and $x=\alpha_2+\delta$, we find $\delta^2 + 2\alpha_2\delta - a\mu^4(g')^2 h^2 \approx 0$, which yields the expansion \eqref{eq:x3-exp}.

Then we analysis the expansion for $x_1(h)$ via Feedback Cycle Analysis. The eigenvalue $x_1$ corresponds to the drift parameter $\vartheta_1$. Let $x=\vartheta_1-\varepsilon$, where $\varepsilon$ is a small correction. Instead of a full block decomposition, we identify the dominant terms in the Leibniz expansion of the determinant.
\begin{itemize}
    \item \textbf{Diagonal contribution (Gap):} The first-order term in $\varepsilon$ arises from the product of the diagonal entries:
    \[
    \text{Diag} \approx \varepsilon(\alpha_2-\vartheta_1)(-2\vartheta_1)(-\alpha_2-\vartheta_1) = 2\vartheta_1\mu^2 \varepsilon.
    \]
    \item \textbf{Cycle contribution (Loop):} The leading perturbation term of order $h^2$ arises from the unique feedback cycle $1 \to 3 \to 4 \to 2 \to 1$ in the interaction graph of $\Xi_{mix}$. This cycle represents the feedback of the estimation error through the observation process:
    \[
    \text{Loop} = \underbrace{(2\mu^{2}g_{1}^{2})}_{1\to3} \cdot \underbrace{(-h)}_{3\to4} \cdot \underbrace{(\frac{1}{2}a\mu^{2})}_{4\to2} \cdot \underbrace{(h)}_{2\to1} = -a\mu^4 g_1^2 h^2.
    \]
\end{itemize}
Balancing the diagonal dominance with this feedback cycle ($\text{Diag} \approx \text{Loop}$) implies $2\vartheta_1\mu^2 \varepsilon \approx -a\mu^4 g_1^2 h^2$, which yields $\varepsilon = -\frac{a\mu^2 g_1^2}{2\vartheta_1} h^2$. Thus, we obtain \eqref{eq:x1-exp}.
\end{proof}

\subsection{Calculation of the Slope and identification of the Fisher information}

By Liouville's formula \cite{Arnold1992}, the exponential growth rate of the determinant is the sum of the positive eigenvalues of the Hamiltonian. Additionally, using the trace asymptotics derived from the rank-one factorization $A(t)=b(t)\ell(t)^{\top}$, the limiting log-Laplace exponent (the ``Slope'') is given by:
\begin{equation}\label{eq:slope-def}
\mathrm{Slope}(h)
:=
\lim_{T\to\infty}\frac{1}{T}\log L_T^{\mathrm{mix}}
=
\frac12\Bigl[(\vartheta_1+\alpha_2)-(x_1(h)+x_3(h))\Bigr].
\end{equation}
Substituting the expansions from Lemma~\ref{lem:eig-exp}, the zero-order terms cancel. Collecting the $h^2$ terms:
\[
\mathrm{Slope}(h) = -\frac{h^2}{2} \left( \frac{a\mu^2 g_1^2}{2\vartheta_1} + \frac{a\mu^4 (g'(\vartheta))^2}{2\alpha_2} \right) + o(h^2).
\]
Recalling that $h = (u_2-u_1)/\sqrt{T}$, we obtain the LAN limit:
\[
  \log L_T^{\mathrm{mix}}(a,\vartheta_1,\vartheta_2)
  \longrightarrow
  -\frac{a}{2}(u_2-u_1)^2\,I_{\mathrm{mix}}(\vartheta),
  \qquad T\to\infty.
\]
The Fisher information $I_{\mathrm{mix}}(\vartheta)$ decomposes into drift and observation components:
\begin{equation}\label{eq:Imix-decomp}
I_{\mathrm{mix}}(\vartheta)
=
\underbrace{\frac{\mu^2 g(\vartheta)^2}{2\vartheta}}_{I_{\mathrm{drift}}}
+
\underbrace{\frac{\mu^4(g'(\vartheta))^2}{2\alpha(\vartheta)}}_{I_{\mathrm{obs}}}.
\end{equation}
Using the algebraic identities $g(\vartheta)=1/(\alpha+\vartheta)$ and $g'(\vartheta)=-1/[\alpha(\alpha+\vartheta)]$, this simplifies to:
\begin{equation}\label{eq:Imix-closed}
I_{\mathrm{mix}}(\vartheta)
=
\frac{\alpha-\vartheta}{2\vartheta(\alpha+\vartheta)}
+
\frac{(\alpha-\vartheta)^2}{2\alpha^3}
=
\frac{1}{2\vartheta} - \frac{2\vartheta}{\alpha(\alpha+\vartheta)} + \frac{\vartheta^2}{2\alpha^3}.
\end{equation}
This expression coincides with the classical Kalman--Bucy Fisher information derived in \cite{BrousteKleptsyna2010}, confirming that the mixed fractional noise does not alter the asymptotic information content of the drift parameter.

\section{Simulation}\label{sec:sim}

We illustrate the asymptotic behavior of the MLE by Monte Carlo simulation.
For a fixed $(\vartheta_{\rm true},\mu,H)$ and horizon $T$, we simulate the model through its innovation-form state--observation representation. The kernel $g(s,t)$ (and the associated deterministic quantities
needed to evaluate the quadratic variation and coefficient functions) is
computed numerically using the procedure in \cite{CX2021}.

On a time grid $0=t_0<\cdots<t_n=T$, we generate independent Gaussian increments
of the driving martingales consistent with the prescribed quadratic variation,
and then propagate the discretized state and observation equations by an
Euler-type scheme. Given one simulated observation path, for each candidate
$\vartheta$ we run the corresponding discretized Kalman filter and evaluate the Gaussian prediction-error (pseudo) log-likelihood; the MLE $\widehat{\vartheta}_T$ is obtained by numerical maximization over $\vartheta>0$. 

\begin{figure}[h]
    \centering
    \includegraphics[width=0.8\textwidth]{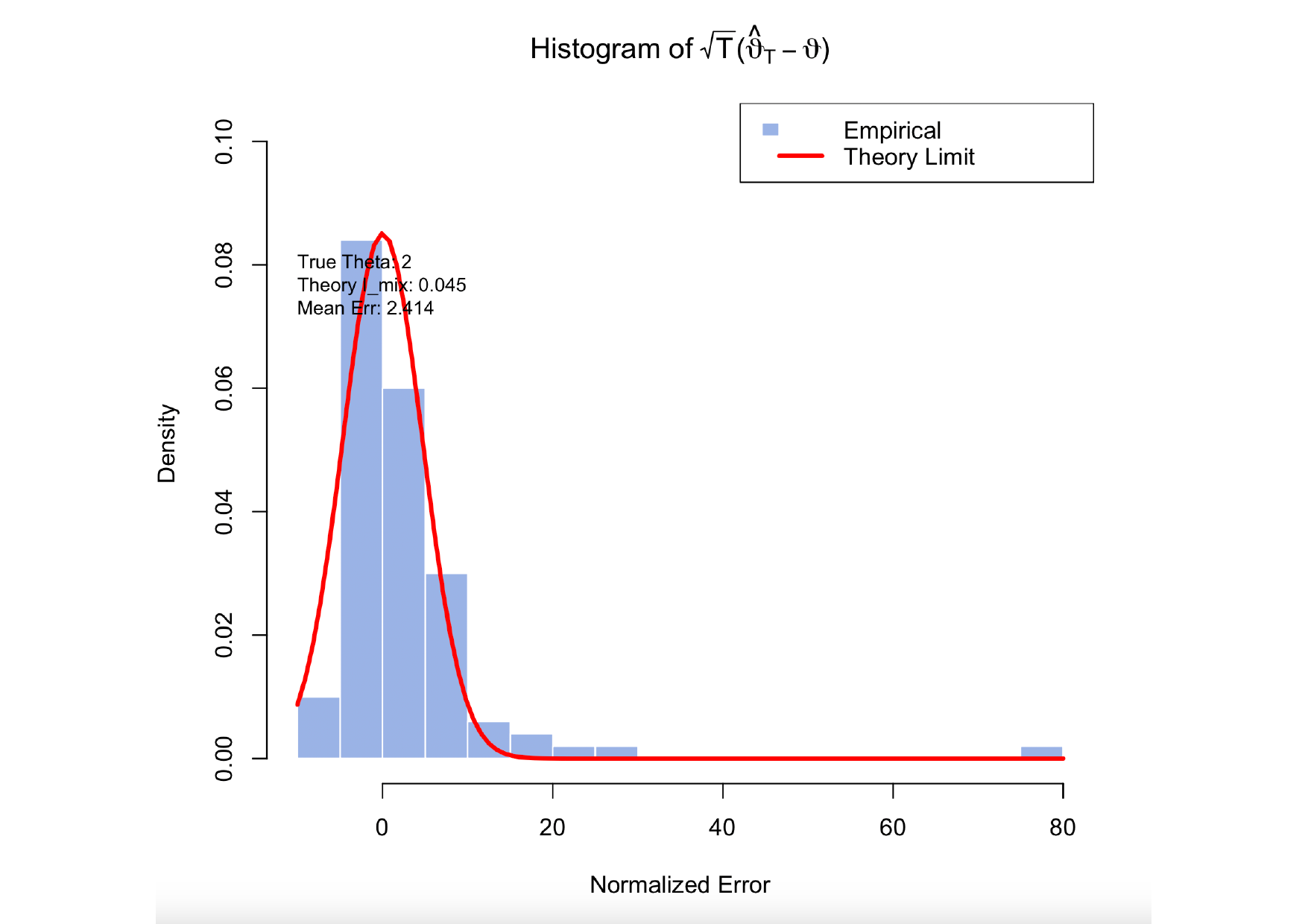} 
    \caption{Histogram of the standardized error $\sqrt{T}(\hat{\vartheta}_{T}-\vartheta)$ for $\vartheta=2.0, \mu=2.0, H=0.75$ with $T=100$. The red curve represents the theoretical asymptotic Gaussian density.}
    \label{fig:simulation}
\end{figure}

The simulation results are presented in Figure~\ref{fig:simulation}. The histogram of the standardized errors shows that the estimator is consistent. We compare the empirical statistics with the theoretical values derived from the Fisher information $\mathcal{I}(\vartheta)$.

\end{document}